\newtheorem{theorem}{Theorem}[section]
\newtheorem{lemma}[theorem]{Lemma}
\theoremstyle{remark}
\newtheorem{definition}[theorem]{Definition}
\theoremstyle{theorem}
\newcommand{\ZZ}{\mathbb{Z}}
\newcommand{\RR}{\mathbb{R}}
\newcommand{\QQ}{\mathbb{Q}}
\newcommand{\CC}{\mathbb{C}}
\newcommand{\NN}{\mathbb{N}}
\newcommand{\cM}{\mathcal{M}}
\newcommand{\cO}{\mathcal{O}}
\newcommand{\cL}{\mathcal{L}}
\DeclareMathOperator{\graph}{graph}
\DeclareMathOperator{\radop}{rad}
\DeclareMathOperator{\rv}{rv}
\DeclareMathOperator{\Th}{Th}
\newcommand{\tr}{\mathrm{tr}}
\newcommand{\RV}{\mathrm{RV}}
\newcommand{\val}{\mathrm{val}}
\newcommand{\acl}{\mathrm{acl}}
\newcommand{\dcl}{\mathrm{dcl}}
\title{Counting rational points on transcendental curves in valued fields}
\author{Floris Vermeulen}
\address{Department of Mathematics, Univeristy of M\"unster, Germany}
\email{florisvermeulen.math@gmail.com}
\date{}
\begin{document}

\begin{abstract}
We prove upper bounds on the number of rational points on transcendental curves in arbitrary $1$-h-minimal fields, similar to the Pila--Wilkie counting theorem in the o-minimal setting.
These results extend results due to Cluckers--Comte--Loeser from $p$-adic fields to arbitrary valued fields of mixed characteristic.
Our methods rely on parametrizations, where we avoid the usage of $r$-th power maps, combined with the determinant method.
\end{abstract}

\maketitle

\section{Introduction}

Recall the statement of the Pila--Wilkie counting theorem, which tells us that transcendental sets do not contain many rational points of bounded height.

\begin{theorem}[{{{Pila--Wilkie~\cite{PW}}}}]
Let $X\subset \RR^n$ be definable in an o-minimal structure. 
Then for every $\varepsilon > 0$ there exists a constant $c = c_\varepsilon > 0$ such that for positive integer $H$ we have
\[
\#X^{\tr}(\QQ, H)\leq cH^\varepsilon.
\]
\end{theorem}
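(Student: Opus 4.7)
The plan is to combine the Yomdin--Gromov $C^r$-reparametrization theorem in its o-minimal form with the Bombieri--Pila determinant method. First I would reduce to the case where $X\subset\RR^2$ is a definable curve, by induction on $\dim X$ via o-minimal cell decomposition, fibering $X$ over $\RR^{\dim X-1}$ and handling one-dimensional fibres uniformly; a further reduction gives the graph of a definable function $f\colon(0,1)\to\RR$. The reparametrization theorem then yields, for every $r\in\NN$, a finite cover of the graph by images of definable $C^r$-maps $\varphi\colon(0,1)\to\RR^2$ all of whose derivatives of order at most $r$ are bounded by $1$ in sup-norm.

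On each such image I would partition $(0,1)$ into $N$ equal subintervals, and on each subinterval $I$ apply the determinant method: given $D=\binom{d+2}{2}$ rational points of height at most $H$ lying on $\varphi(I)$, consider the $D\times D$ determinant whose rows list the monomials in $x,y$ of total degree at most $d$ evaluated at these points. Taylor-expanding along $\varphi$ and using the derivative bounds one estimates this determinant by a power of the subinterval length times factorials; for $N$ polynomially large in $H$ and $r$ large compared to $d$ this upper bound is strictly less than the reciprocal of the common denominator of the entries, so being a rational with that denominator the determinant must vanish. Consequently the $D$ points lie on a common real-algebraic curve of degree at most $d$. Choosing $d$ and $r$ depending on $\varepsilon$ so that $N=O_\varepsilon(H^\varepsilon)$, this gives a cover of the rational points of height at most $H$ on $\im\varphi$ by $O_\varepsilon(H^\varepsilon)$ algebraic curves of bounded degree.

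To conclude, for each such algebraic curve $C$ the intersection $C\cap X$ is definable of dimension $0$ or $1$; by definition of $X^{\tr}$, every positive-dimensional component lies in the algebraic part, so $C\cap X^{\tr}$ is finite, and uniform finiteness in o-minimal families gives a bound on its size independent of $C$. Summing over the $O_\varepsilon(H^\varepsilon)$ curves and the finitely many $\varphi$'s yields $\#X^{\tr}(\QQ,H)\le c_\varepsilon H^\varepsilon$. The principal obstacle is producing the definable $C^r$-parametrizations with derivatives bounded by $1$: this requires a delicate induction on cells combined with $r$-th power substitutions to flatten derivative blow-ups near cell boundaries, and is the technical heart of the Pila--Wilkie proof.
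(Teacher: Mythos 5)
The paper does not prove the Pila--Wilkie theorem; it states it as background and cites the original reference, so there is no in-paper proof to compare against. Evaluating your sketch on its own: you correctly identify the two pillars (Yomdin--Gromov $C^r$-reparametrization and the Bombieri--Pila determinant method), and your treatment of the curve case is essentially sound. The problem is the opening reduction. Fibering a $k$-dimensional $X$ over $\RR^{k-1}$ and ``handling one-dimensional fibres uniformly'' cannot give $O(H^\varepsilon)$: the rational points of height at most $H$ on $X$ sit over roughly $H^{O(k)}$ rational base points, and even an $O(H^\varepsilon)$ bound per fibre only yields $O(H^{O(k)+\varepsilon})$ overall. The actual Pila--Wilkie argument does not reduce to curves at all. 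The parametrization and determinant method are run directly in dimension $k$ (maps $\varphi\colon(0,1)^k\to\RR^n$, a $\binom{k+d}{k}\times\binom{k+d}{k}$ monomial determinant), producing $O_\varepsilon(H^\varepsilon)$ hypersurfaces of degree $d$ that capture all the rational points; one then intersects $X$ with each hypersurface and inducts on $\dim X$. The technical heart of the higher-dimensional case --- which your sketch omits entirely --- is making this induction \emph{uniform} over the semialgebraic family of hypersurfaces: one cannot invoke the theorem for each $X\cap V$ with a separate constant. Pila and Wilkie handle this via the formulation with blocks and a compactness/model-theoretic argument; without some substitute for it, the inductive step simply does not close.

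Two smaller points. The set $X^{\tr}$ is not in general definable, so one cannot directly apply uniform finiteness to $\{C\cap X^{\tr}\}_C$; the correct move is to bound the isolated points of $X\cap C$, which do form a definable family, and note that $X^{\tr}\cap C$ is contained among them. Also, you are right that Pila--Wilkie's reparametrization uses $r$-th power substitutions near cell boundaries, but it is worth observing that a central novelty of the present paper is precisely to \emph{avoid} such substitutions in the valued-field setting, replacing them with scaling maps on balls next to a definable centre.
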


Here $X^\tr$ denotes the \emph{transcendental part of $X$}, obtained from $X$ by removing all connected semi-algebraic curves from $X$, and $X^\tr(\QQ, H)$  is the set of rational points $(a_1/b_1, \ldots, a_n/b_n)\in X^\tr\cap \QQ^n$ for which $|a_i|, |b_i|\leq H$.
This theorem has seen some spectacular applications in Diophantine geometry and Hodge theory, and has recently led to the resolution of the Andr\'e--Oort conjecture~\cite{Andre-Oort}.

In the non-Archimedean setting, a Pila--Wilkie type counting theorem was first obtained by Cluckers--Comte--Loeser~\cite{CCL-PW} for $p$-adic fields with an analytic structure, and a uniform version was later proven by Cluckers--Forey--Loeser~\cite{CFL}, leading to results also in large positive characteristic.
In upcoming work with Cluckers and Halupczok, we will extend these results to $p$-adic fields equipped with $1$-h-minimal structure~\cite{CHV}.
This $1$-h-minimality is an axiomatic framework for tame geometry in valued fields, similar to o-minimality for real closed fields~\cite{CHR, CHRV}.

The aim of this note is to prove a Pila--Wilkie theorem in arbitrary $1$-h-minimal valued fields, and not just in $p$-adic fields.
A similar result appears in~\cite{CNSV}, where we proved a Pila--Wilkie theorem for equicharacteristic zero valued fields.
However, there one uses a different notion of rational points, and one bounds them by providing a finite-to-one map from the rational points to a higher residue ring.
In contrast, the results in this paper bound the number of rational points purely in terms of the residue characteristic, and are insensitive to the residue field and value group.
Unfortunately the methods only work for curves, and we are unable to obtain results in higher dimensions.
In~\cite{CHV} we originally expected to be able to prove this type of result in any dimension, but the higher-dimensional case seems more difficult than expected.

\subsection*{Main result}
Let $K$ be a valued field of mixed characteristic $(0,p)$, equipped with $1$-h-minimal structure.
Whenever we say \emph{definable}, we will mean definable in this $1$-h-minimal structure with parameters from $K$.
Similar to o-minimality, definable sets in $K^n$ have a dimension with the usual properties.
By a \emph{curve} in $K^n$ we then simply mean a one-dimensional definable set.
A curve $C\subset K^n$ is said to be \emph{transcendental} if for every algebraic curve $D\subset K^n$ the intersection $D\cap C$ is finite.
For a positive integer $H$, denote by $C(\QQ, H)$ the set of rational points $(a_1/b_1, \ldots, a_n/b_n)\in C\cap \QQ^n$ for which $|a_i|, |b_i| \leq H$ for $i=1, \ldots, n$.
In other words, this is simply the set of rational points on $C$ of height at most $H$.
Our main result is then as follows. 

\begin{theorem}\label{thm:main}
Let $K$ be a valued field of mixed characteristic $(0,p)$ equipped with $1$-h-minimal structure.
Let $C\subset K^n$ be a transcendental definable curve, then for every $\varepsilon > 0$ there exists a constant $c = c_\varepsilon>0$ such that for every $H\geq 1$ we have
\[
\# C(\QQ, H)\leq cH^\varepsilon.
\]
\end{theorem}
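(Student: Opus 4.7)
My plan is to combine two pillars familiar from non-Archimedean Pila--Wilkie arguments: a tame parametrization of $C$ by definable maps with controlled derivatives, and the determinant method applied cluster by cluster. The role that o-minimality plays in the real Pila--Wilkie proof is played here by $1$-h-minimality, which supplies cell decomposition, dimension theory, and Taylor approximation for definable functions in $K$.

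First I would reduce to the case of a curve $C$ presented as the image of a definable map $\phi : U \to K^n$ with $U \subset \OO_K$, modulo a harmless finite set coming from the boundary of the cell decomposition. Using $1$-h-minimality I would then further subdivide $U$ into a uniformly bounded number of definable balls on each of which every coordinate of $\phi$ admits a good Taylor approximation: for a given degree $d$, I can arrange that on a ball of radius $p^{-N}$ the remainder of the order-$d$ Taylor polynomial of each coordinate has valuation at least $Nd$ (up to a constant depending on $\phi$ and $d$). This is the non-Archimedean analogue of $C^r$-parametrization, and the point emphasised in the introduction is that one can extract it from the $1$-h-minimal axioms without passing through the $r$-th power substitutions $x \mapsto x^r$ that cause trouble in residue characteristic $p$.

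With parametrizations in hand, I would run the determinant method. Fix $d$ to be chosen in terms of $\varepsilon$, and let $D = \binom{n+d}{n}$ be the number of monomials of degree $\leq d$ on $K^n$. Cover $U$ by balls of radius $p^{-N}$ for $N$ a parameter we tune against $H$. On one such ball $B$, pick any $D$ rational points $\phi(t_1), \ldots, \phi(t_D) \in C(\QQ, H)$ with $t_i \in B$, and form the $D \times D$ matrix $M$ whose $(i,j)$ entry is the $j$-th monomial of degree $\leq d$ evaluated at $\phi(t_i)$. Taylor expansion of the coordinates of $\phi$ makes $\det(M)$ valuatively small, while clearing denominators bounds the archimedean size of (a scalar multiple of) $\det(M)$ by a fixed power of $H^d$. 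Choosing $N$ large enough, these two estimates are incompatible unless $\det(M) = 0$, so all rational points of $C(\QQ,H)$ lying in $\phi(B)$ lie on a common hypersurface of degree $\leq d$. Since $C$ is transcendental, each such hypersurface meets $C$ in only finitely many points, bounded uniformly in terms of the definable family. Counting the number of balls needed and optimising $d$ and $N$ in terms of $\varepsilon$ then yields $\#C(\QQ, H) \leq c_\varepsilon H^\varepsilon$.

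The main obstacle will be the parametrization step: to make the whole argument insensitive to the residue field and value group (so that the constant depends only on the residue characteristic, as advertised), the Taylor bounds for $\phi$ have to be extracted from $1$-h-minimality alone, without the customary trick of precomposing with $r$-th power maps, which fails to flatten derivatives when $p \mid r$. Aligning the exponent produced by this parametrization with the bound coming from the determinant method, in a way uniform across mixed characteristic valued fields, is where I expect most of the technical work to go; everything after that, including the transcendence-based absorption of the algebraic hypersurfaces met by $C$, is a now-standard packaging.
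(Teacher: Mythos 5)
Your high-level plan---parametrize $C$, normalize to get $T_r$-type Taylor bounds, run the determinant method cluster by cluster, and absorb the resulting algebraic curves via transcendence---matches the paper's strategy, and you correctly identify that $r$-th power substitutions have to be avoided. However, the step you dispatch in half a sentence, \enquote{Counting the number of balls needed}, is exactly where the paper's genuinely new work lies, and your version of that step does not go through in a general valued field.

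Two concrete issues. First, $1$-h-minimal cell decomposition does not subdivide $U$ into \enquote{a uniformly bounded number of definable balls}; it produces finitely many cells each $M$-prepared by a center $c$, and each such cell is the union of all balls $M$-next to $c$---potentially infinitely many when the residue field is infinite (e.g.\ $K=\CC_p$ or $K=\QQ_p^{\mathrm{unram}}$) or the value group is dense. The $T_r$ property is only obtained on each such ball after precomposing with a scaling map $s_a:\cM_K\to B$ (Lemma~\ref{lem:scaling.Tr}); without that normalization the derivatives of the parametrizing map are bounded only by $\radop(B)^{-j}$, not by $|j!|$. Second, and more seriously, you propose to \enquote{cover $U$ by balls of radius $p^{-N}$}; in a valued field with infinite residue field this requires infinitely many balls, so choosing $N$ to kill the determinant does not produce a finite count of hypersurfaces. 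The missing idea is Lemma~\ref{lem:count.balls}: one does not count all balls $N$-next to $c$, only those that actually contain a rational number of height at most $H$, and these number at most $O(Mp^{N}\log_p H)$ because rationals of bounded height realize only $O(\log_p H)$ distinct distances to $c$ and, at a fixed distance, can hit at most $O(p^N)$ residues. This is what makes the final bound depend only on $p$ and not on the residue field or value group; without such a lemma your argument only works for local fields, not for the general $1$-h-minimal setting the theorem addresses.
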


As an example, this result applies to $K=\CC_p$ or $K=\QQ_p^{\mathrm{unram}}$ equipped with an analytic structure, and is the first such theorem in arbitrary valued fields.

Similarly to the Pila--Wilkie theorem, this result has several natural generalizations.
For example, by compactness one can deduce a version for definable families $(C_y)_{y\in Y}$ of transcendental curves $C_y\subset K^n$ where the constant $c$ is independent of the parameter $y\in Y$, and only depends on the family $(C_y)_y$.
One can also prove a uniform-in-$p$ version of this result, similar to~\cite{CFL}.
More precisely, the proof of Theorem~\ref{thm:main} shows that $c$ is of the form $Mp^\alpha$, where $M$ and $\alpha$ are positive integers which are even independent of $K$.
This allows one to prove Theorem~\ref{thm:main} also for valued fields of large positive characteristic, where the characteristic required depends on $C$.
As this paper is intended to be short and accessible, we leave these generalizations to the interested reader.

Other generalizations, for example counting algebraic points of bounded degree, or versions with blocks as in~\cite{Pila.algebraic} are more difficult to obtain.
The reason is that these rely on a higher-dimensional version of Theorem~\ref{thm:main}.
We will come back to issues in generalizing this theorem to higher dimension at the end of Section~\ref{sec:determinant.method}.

\subsection*{Proof strategy} Let us briefly discuss the proof of Theorem~\ref{thm:main}.
So let $C\subset \cO_K^n$ be a transcendental definable curve.
\begin{enumerate}
\item After a projection argument we may assume that $n=2$. 
This is achieved in Lemma~\ref{lem:proj}.
\item We then write $C$ as the union of finitely many graphs of functions $f: U\subset \cO_K\to \cO_K$. 
By 1-h-minimality, we may assume that these functions are $C^r$ and are well-approximated by their Taylor polynomial.
Combining this with the Jacobian property from~\cite{CHRV}, we moreover obtain strong bounds on the derivatives of $f$.
In particular, if $B\subset U$ is a suitable open ball and $s: \cM_K\xrightarrow{\sim} B$ is a scaling map, then we show that $f\circ s$ is $T_r$ in the sense of~\cite{CCL-PW, CFL, CHV}.
This avoids the usage of $r$-th power maps and is similar in spirit to the resolution of Wilkie's conjecture due to Binyamini--Novikov--Zak~\cite{BinNovZak}.
\item We now apply the determinant method similarly to Bombieri--Pila~\cite{Bombieri-Pila}, which gives us a radius $\rho$ such that for every ball $B\subset \cM_K$ of radius $\rho$, there exists an algebraic curve passing through all rational points of the graph of $f\circ s$ over $B$.
\item The final ingredient is to show that most balls $B\subset \cM_K$ of radius $\rho$ do not actually yield any rational points at all, and these may safely be ignored in the counting.
We then count the remaining balls of radius $\rho$ and show that there are at most $O(H^\varepsilon)$ of these.
This leads to the improvements of the current paper, and the results valid in every valued field.
\end{enumerate}

\subsection*{Acknowledgements} The author thanks Raf Cluckers and Mathias Stout for many conversations about rational points and parametrizations, especially in higher dimensions. 
The author is supported by the Humboldt foundation.

\section{Preliminaries}

Throughout this entire paper we will work in a fixed valued field $K$ of mixed characteristic $(0,p)$ in some language $\cL\supset \cL_\val$.
We will assume that $\Th_{\cL}(K)$ is $1$-h-minimal, see~\cite{CHRV} for details.
The precise definition is not important for us, and we will recall the required consequences of $1$-h-minimality in this section.

We will moreover assume that $\Th_{\cL}(K)$ has algebraic Skolem functions, i.e.\ every $\emptyset$-definable finite-to-one surjection $X\to Y$ between $\emptyset$-definable sets has a $\emptyset$-definable section.
Or in other words, for every $L\equiv_\cL K$ and every $A\subset L$ we have that $\acl_{\cL}(A) = \dcl_{\cL}(A)$.
This is no loss in generality in view of~\cite[Prop.\,3.2.3]{CHRV}, together with the fact that Theorem~\ref{thm:main} only talks about rational points.

The valuation ring of $K$ is denoted by $\cO_K$, the maximal ideal by $\cM_K$ and the valuation group by $\Gamma^\times$.
We use multiplicative notation for the valuation $|\cdot |: K\to \Gamma = \Gamma^\times \cup \{0\}$.
For $N$ a positive integer let $\RV_N = K^\times / (1+N\cM_K)\cup \{0\}$, with corresponding quotient map $\rv_N: K\to \RV_N$ extended by $\rv_N(0) = 0$.
Write $\RV_N^\times = \RV_N\setminus \{0\}$.

If $a\in K$ and $\lambda\in \Gamma^\times$ then the open ball around $a$ with radius $\lambda$ is denoted by $B_{<\lambda}(a)$.
The closed ball is denoted by $B_{\leq \lambda}(a)$.
If $B$ is an open ball then its radius is denoted by $\radop B$.
For $c\in K$ and $N$ a positive integer, an open ball $B\subset K$ is said to be \emph{$N$-next to $c$} if there exists a $\xi\in \RV_N^\times$ such that $B = \rv_N^{-1}(\xi) + c$.
If $C\subset K$ is a finite set, then an open ball $B\subset K$ is \emph{$N$-next to $C$} if it is of the form $\cap_{c\in C} B_c$ where each $B_c$ is $N$-next to $c$.
Note that two elements $x,y\in K\setminus C$ lie in the same ball $N$-next to $C$ if and only if for every $c\in C$ we have $|x-y| < |N||x-c|$.

Definable functions in $1$-h-minimal structures automatically have the Jacobian property.

\begin{theorem}[Jacobian property]\label{thm:jac.prop}
Let $f: K\to K$ be a $\emptyset$-definable function and let $M$ be a positive integer.
Then there exists a finite $\emptyset$-definable set $C\subset K$ and a positive integer $N$ such that for each ball $B$ $N$-next to $C$ the following hold:
\begin{enumerate}
\item $f$ is $C^1$ and $\rv_M(f')$ is constant on $B$,
\item for all $x,y\in B$ we have
\[
\rv_M(f(x)-f(y)) = \rv_M(f'(x)(x-y)),
\]
\item if $B'\subset B$ is an open ball then $f(B')$ is either a singleton or an open ball of radius $|f'(x)|\radop B'$ (for any $x\in B$).
\end{enumerate}
\end{theorem}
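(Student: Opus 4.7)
The plan is to deduce the Jacobian property from the higher-order Taylor approximation available in every $1$-h-minimal structure, as established in~\cite{CHRV}. That foundational result says: for any $\emptyset$-definable function $f:K\to K$ and any integer $r\geq 1$, there exist a finite $\emptyset$-definable set $C_0\subset K$ and a positive integer $N_0$ such that on every ball $B$ which is $N_0$-next to $C_0$, the function $f$ is $C^r$ on $B$ and
\[
\Bigl| f(y) - \sum_{i=0}^{r-1} \tfrac{f^{(i)}(x)}{i!}(y-x)^i \Bigr| \leq |N_0|^{-1}\cdot |f^{(r)}(x)|\cdot |y-x|^r
\]
for all $x,y\in B$. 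I would apply this with $r=2$ and with $N_0$ chosen large enough in terms of $M$.

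For part~(2), the $r=2$ estimate reads
\[
f(y) - f(x) - f'(x)(y-x) = O\bigl(|N_0|^{-1}\cdot |f''(x)|\cdot |y-x|^2\bigr),
\]
which is $|M|$-small relative to $f'(x)(y-x)$ as soon as $|N_0|^{-1}|f''(x)|\cdot \radop B \leq |M|\cdot|f'(x)|$ on the ambient ball $B$. Enforcing this amounts to refining $C_0$: the set of $x$ for which the comparison fails is $\emptyset$-definable, and by $1$-h-minimality one can adjoin finitely many points to $C_0$ (and enlarge $N_0$) so that, on each refined ball, either $f'$ vanishes identically or the required comparison holds. On these balls, (2) follows immediately. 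For part~(1), $C^1$-ness is part of the Taylor statement, and the constancy of $\rv_M(f')$ is obtained from (2): fixing $x$ and varying $y$ in the same ball, the relation in (2) combined with the symmetric identity obtained by swapping $x$ and $y$ forces $\rv_M(f'(y))=\rv_M(f'(x))$.

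Part~(3) is then a direct consequence of (1) and (2). Combining them, $f(y)-f(x) = f'(x)(y-x)\cdot u(x,y)$ with $u(x,y)\in 1+M\cM_K$ whenever $x,y$ lie in the same $N$-next ball $B$. If $f' \equiv 0$ on $B$ then $f$ is locally constant, so $f(B')$ is a singleton for every sub-ball $B'$. Otherwise $|f'(x)|$ is a common nonzero value on $B$, and the above identity shows that $f$ maps every open sub-ball $B'\subset B$ bijectively onto the open ball of radius $|f'(x)|\radop B'$ centred at $f(x)$.

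The main obstacle is the refinement step for (2), which requires carefully tracking how $N$ depends on $M$ and a definable analysis of the ratio $|f''|/|f'|$ across the next-ball structure. This is the place where the axiomatic strength of $1$-h-minimality is essential, as opposed to the weaker statement that some form of Taylor approximation exists.
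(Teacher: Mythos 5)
The paper does not prove this statement at all; it simply cites \cite[Cor.\,3.1.4]{CHRV}. So any honest derivation is, by definition, taking a different route from the paper. Your strategy --- deducing the $\rv_M$-Jacobian property from Taylor approximation --- is in fact close in spirit to how the result is established in \cite{CHRV}, so the overall plan is sensible.

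That said, your sketch has a real gap precisely at the step you flag as the obstacle, and it is not a minor refinement issue but the entire content of the theorem. Two concrete problems. First, Theorem~\ref{thm:Taylor} in this paper gives an \emph{equality}
\[
|f(x) - T^{\leq r}_{f,y}(x)| = \Bigl| \tfrac{1}{(r+1)!}f^{(r+1)}(y)(x-y)^{r+1}\Bigr|,
\]
with no extra factor $|N_0|^{-1}$ that you could enlarge at will; so the remainder is not automatically $|M|$-small and you genuinely need to control $|f''(x)|\,|y-x|$ against $|M|\,|f'(x)|$. Second, the claim that ``the set of $x$ for which the comparison fails is $\emptyset$-definable, and one can adjoin finitely many points to $C_0$'' is not correct as stated: that set need not be finite (it can be a union of balls), and adjoining finitely many points does not shrink all balls uniformly in the way you need. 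What actually closes the gap is applying the (plain $\rv$-)Jacobian property to $f'$ itself: on balls $N$-next to a suitable finite set one has $\rv(f')$ constant and $|f'(x)-f'(y)| = |f''(x)|\,|x-y|$, whence $|f''(x)|\,|x-y| < |f'(x)|$ automatically for $x,y$ in the same ball; passing from $N$-next to $NM$-next balls then manufactures the missing factor $|M|$, since radii contract by $|M|$. In other words, the refinement requires not generic definability plus $1$-h-minimality, but a prior, more basic instance of the Jacobian property (for $f'$, at the $\rv$-level), which is exactly the ingredient whose availability makes the argument non-circular but which your write-up leaves implicit. Your deductions of~(1) from~(2) by the swap argument and of~(3) from~(1) and~(2) are fine once~(2) is in place.
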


\begin{proof}
See~\cite[Cor.\,3.1.4]{CHRV}.
\end{proof}

If the above properties hold for $f$ and an open ball $B$ contained in the domain of $f$, then we say that $f$ has the \emph{Jacobian property on $B$}.

If $f: U\subset K\to K$ is a $C^r$ function and $y\in U$ then we define the \emph{order $r$ Taylor polynomial} as usual via
\[
T_{f,y}^{\leq r}(x) = T_{f,y}^{<r+1}(x) = \sum_{i=0}^r \frac{f^{(i)}(y)}{i!} (x-y)^i.
\]
Definable functions are well-approximated by their Taylor polynomial.

\begin{theorem}[Taylor approximation]\label{thm:Taylor}
Let $f:  K\to K$ be a $\emptyset$-definable function and let $M$ be a positive integer.
Then there exists a finite $\emptyset$-definable set $C\subset K$ and a positive integer $N$ such that for every ball $B$ $N$-next to $C$, $f$ is $C^{r+1}$, $|f^{(r+1)}|$ is constant on $B$, and for $x,y\in B$ we have
\[
|f(x) - T^{\leq r}_{f,y}(x)| = \left| \frac{1}{(r+1)!}f^{(r+1)}(y)(x-y)^{r+1}\right|.
\]
\end{theorem}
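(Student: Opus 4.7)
My approach is to deduce the Taylor estimate from the Jacobian property (Theorem~\ref{thm:jac.prop}) by applying it simultaneously to the derivatives $f, f', \ldots, f^{(r)}$ and then inducting on $r$. Applying Theorem~\ref{thm:jac.prop} to each $\emptyset$-definable function $f^{(i)}$ for $0 \leq i \leq r$ with a common auxiliary modulus $M'$ (to be chosen large in terms of $M$ and $r$) produces finite $\emptyset$-definable sets $C_i$ and integers $N_i$. Taking $C = \bigcup_i C_i$ and $N$ a common multiple of the $N_i$, every ball $B$ that is $N$-next to $C$ is simultaneously $N_i$-next to $C_i$ for each $i$, so on $B$ the function $f$ is $C^{r+1}$, each $|f^{(i+1)}|$ is constant (in particular $|f^{(r+1)}|$ is constant), and every derivative $f^{(i)}$ with $0 \leq i \leq r$ satisfies the Jacobian identity
\[
\rv_{M'}\bigl(f^{(i)}(x) - f^{(i)}(z)\bigr) = \rv_{M'}\bigl(f^{(i+1)}(x)(x-z)\bigr)
\]
for all $x, z \in B$. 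This immediately delivers the regularity part of the conclusion and leaves only the remainder formula to prove.

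For the remainder formula I would induct on $r$. The base case $r = 0$ is exactly part~(2) of Theorem~\ref{thm:jac.prop}. For the inductive step, fix $y \in B$ and consider the error $E(x) := f(x) - T^{\leq r}_{f, y}(x)$, which satisfies $E(y) = 0$ and $E'(x) = f'(x) - T^{\leq r-1}_{f', y}(x)$. The inductive hypothesis applied to $f'$ at level $r - 1$ then gives
\[
\bigl|E'(x)\bigr| = \Bigl|\tfrac{1}{r!}\, f^{(r+1)}(y)(x-y)^r\Bigr|, \qquad x \in B.
\]
Recovering $|E(x)|$ from this requires iterating the Jacobian identity for $E$ along a tower of sub-balls around $y$ on each of which the $\rv_{M'}$-class of $E'$ is constant. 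It is convenient to strengthen the inductive claim to an $\rv_{M'}$-level statement, namely
\[
\rv_{M'}\bigl(E(x)\bigr) = \rv_{M'}\Bigl(\tfrac{1}{(r+1)!}\, f^{(r+1)}(y)(x-y)^{r+1}\Bigr),
\]
from which the required equality of absolute values follows.

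The main obstacle is producing the factor $\tfrac{1}{(r+1)!}$ in the conclusion. A single application of the Jacobian property only yields a first-order mean-value-type identity, and the combinatorial factor $\tfrac{1}{r+1}$ must emerge by telescoping Jacobian estimates across the chain of sub-balls on which $\rv_{M'}(E')$ is constant. In mixed characteristic $(0, p)$, when $p \leq r + 1$, the absolute value $|\tfrac{1}{(r+1)!}|$ can be strictly larger than $1$, so one must choose $M'$ large enough in terms of $|r!|$ that the cumulative error from the Jacobian approximations is strictly dominated by the leading Taylor term. Once $M'$ is sufficiently large, the ultrametric inequality upgrades the $\rv_{M'}$-equality above to the stated equality of absolute values, completing the induction.
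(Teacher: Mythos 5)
The paper does not prove this theorem; the ``proof'' given is the citation \cite[Thm.\,3.1.2]{CHRV}. So the relevant question is whether your argument is a correct self-contained proof, and it is not: the inductive step has a genuine gap, and the way you propose to close it cannot work.

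Your setup --- simultaneously preparing $f, f', \ldots, f^{(r)}$, inducting on $r$ with the Jacobian property as the base case, and reducing the inductive step to estimating $E(x) := f(x) - T^{\leq r}_{f,y}(x)$ given the inductive estimate $|E'(x)| = |\tfrac{1}{r!}f^{(r+1)}(y)(x-y)^r|$ --- is reasonable as far as it goes. The trouble is the passage from $E'$ back to $E$. You propose to ``iterate the Jacobian identity for $E$ along a tower of sub-balls around $y$,'' but $y$ is not contained in any ball on which $E$ has the Jacobian property: $E$ has an order-$(r+1)$ zero at $y$, so any preparation of $E$ must include $y$ in its prepared set, and the Jacobian property then only holds on balls $N'$-next to $y$, from which $y$ is excluded. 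Two points on different shells around $y$ (i.e.\ at different distances from $y$) never lie in a common such ball, so the telescoping you envision has no legal steps moving inward toward $y$. Worse, even pretending a Jacobian mean-value identity did apply to the pair $(x,y)$, it would give $|E(x)| = |E'(x)|\,|x-y| = |\tfrac{1}{r!}f^{(r+1)}(y)|\,|x-y|^{r+1}$, which is \emph{smaller} than the target $|\tfrac{1}{(r+1)!}f^{(r+1)}(y)|\,|x-y|^{r+1}$ by a factor of $|r+1|$ whenever $p \mid (r+1)$. The explicit example $f(x) = x^{r+1}$, $y = 0$ illustrates this: there $E(x) = x^{r+1}$, so $|E(x)| = |x|^{r+1}$, while $|E'(x)|\,|x| = |r+1|\,|x|^{r+1} < |x|^{r+1}$. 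Thus the missing factor $|r+1|^{-1}$ is not, as you suggest, an error term that can be absorbed by choosing $M'$ large; it is precisely the discrepancy between a Jacobian-type estimate and the true local behavior of $E$ at its high-order zero, and no refinement of the preparation recovers it from the Jacobian property alone. The actual proof in \cite{CHRV} requires a considerably more delicate argument than a derivative-by-derivative application of the Jacobian property followed by telescoping.
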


\begin{proof}
See~\cite[Thm.\,3.1.2]{CHRV}.
\end{proof}

If the results of this theorem hold for $f$ on an open ball $B$ contained in its domain, then we say that $f$ has \emph{order $r$ Taylor approximation on $B$.}

The following notion of $T_r$ maps is key to the Diophantine application.

\begin{definition}[$T_r$ maps]
Let $r\geq 0$ be an integer and let $f: U\subset \cO_K\to \cO_K$ on an open $U$.
We say that $f$ is \emph{$T_r$} if $f$ is $C^r$ on $U$, $|f^{(i)}(x)|\leq |i!|$ for $x\in U$ and $i=0, \ldots, r$, and for every $x,y\in U$ we have
\[
|f(x)-T^{<r}_{f,y}(x)| \leq |x-y|^r.
\]
\end{definition}

\section{Parametrizations}

We first show that we may assume that $C$ is a planar curve.

\begin{definition}
Let $d$ be a positive integer.
A subset $X\subset K^2$ is said to be \emph{non-algebraic up to degree $d$} if for every algebraic curve $D\subset K^2$ of degree at most $d$, $D\cap X$ is finite.
\end{definition}

If $X$ is definable in a $1$-h-minimal structure and non-algebraic up to degree $d$ then $D\cap X$ is in fact uniformly bounded over all algebraic curves of degree at most $d$.
Indeed, this follows from uniform finiteness as in~\cite[Lem.\,2.5.2]{CHR}.

\begin{lemma}\label{lem:proj}
Let $C\subset K^n$ be a transcendental definable curve and let $d$ be a positive integer.
Then there exists a partition $C = \sqcup_i C_i$ into $\emptyset$-definable subsets $C_i\subset C$, and for each $i$ a coordinate projection $\pi_i: K^n\to K^2$ such that $\pi_{|C_i}$ is a bijection onto its image, and $\pi_i(C_i)$ is non-algebraic up to degree $d$.
\end{lemma}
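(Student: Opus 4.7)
My plan is first to reduce to pieces on which one coordinate parametrizes $C$, and then, for each such piece, to find a second coordinate whose projection yields a non-algebraic image; the transcendence of $C$ is what rules out the degenerate case. First I would use dimension theory, uniform finiteness, and algebraic Skolem functions to partition $C$ into finitely many $\emptyset$-definable pieces $C_\alpha$ on each of which some coordinate projection $\pi_{k_\alpha}\colon K^n\to K$ is injective. Fix one such piece and relabel so that $k_\alpha=1$, writing $C_\alpha = \{(x,\gamma_2(x),\ldots,\gamma_n(x)) : x\in I_\alpha\}$ for definable $\gamma_j\colon I_\alpha\to K$.

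For each $j\geq 2$ I would set $X_j := \pi_{1j}(C_\alpha)\subset K^2$ and let $Z_j$ be the union of the irreducible components of degree $\leq d$ of the Zariski closure of $X_j$; there are finitely many such components, and the whole construction is $\emptyset$-definable. The key claim is that $X_j\setminus Z_j$ is non-algebraic up to degree $d$: if some algebraic curve $D$ of degree $\leq d$ met $X_j\setminus Z_j$ in infinitely many points, then $D$ would contain an irreducible component of degree $\leq d$ contained in the Zariski closure of $X_j$, hence coinciding with a component of some $Z_j$, contradicting disjointness. Partitioning $I_\alpha$ by the value of $S(x) := \{j : (x,\gamma_j(x))\notin Z_j\}\subseteq\{2,\ldots,n\}$ yields finitely many $\emptyset$-definable pieces $I_S$; on the lift to $C_\alpha$ of any $I_S$ with $S\neq\emptyset$, picking any $j\in S$ gives a projection $\pi_{1j}$ (injective by injectivity of $\pi_1$) whose image is contained in $X_j\setminus Z_j$, and so is non-algebraic up to degree $d$.

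The remaining case is the piece over $I_\emptyset$, where $(x,\gamma_j(x))\in Z_j$ for every $j$. I would further partition by which irreducible component $D_j$ of $Z_j$ each point belongs to, yielding finitely many sub-pieces $C'$ each contained in the algebraic set $V := \bigcap_{j\geq 2}\pi_{1j}^{-1}(D_j)\subset K^n$. Each $D_j$ projects dominantly to the $x_1$-line (because $\pi_1(C_\alpha)$ is one-dimensional), so the irreducible polynomial cutting out $D_j$ has positive degree in $x_j$; hence generic fibers of $V\to K$ over $x_1$ are finite, and the union of the components of $V$ dominating $x_1$ is a one-dimensional algebraic subset of $K^n$ containing $C'$ up to a finite set. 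Transcendence of $C$ then forces $C'$ to be finite, and finite sets are trivially non-algebraic up to degree $d$. The hard part is this final dimension computation for $V$: one must exclude degenerate vertical components $\{x_1=\mathrm{const}\}$ among the $D_j$'s, which is exactly where the one-dimensionality of $\pi_1(C_\alpha)$ is used.
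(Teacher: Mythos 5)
The paper's ``proof'' of Lemma~\ref{lem:proj} is just a citation to~\cite[Lem.\ 4.2.2]{CNSV}, so I'll assess your argument on its own terms. The overall shape --- first make $\pi_1$ injective on each piece, then seek a second coordinate giving a good planar projection, with transcendence of $C$ killing the bad case --- is a sensible and standard strategy. However, there is a genuine gap in how you isolate the algebraic part of $X_j$.

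You set $Z_j$ to be the union of the irreducible components of degree $\leq d$ of the Zariski closure of $X_j$, and then claim that any degree-$\leq d$ curve $D$ meeting $X_j\setminus Z_j$ infinitely often must share a component with $Z_j$. The step ``$D$ contains an irreducible curve $E\subseteq \overline{X_j}^{\mathrm{Zar}}$ of degree $\leq d$, hence $E$ is a component of $Z_j$'' fails whenever $\overline{X_j}^{\mathrm{Zar}}=K^2$, which is the typical situation: if $\gamma_j$ has any transcendental behaviour anywhere on $I_\alpha$, the graph $X_j$ is Zariski dense, its Zariski closure has no one-dimensional components, and your $Z_j$ is empty. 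But $X_j$ can perfectly well be Zariski dense \emph{and} contain an infinite algebraic piece. Concretely, take $\gamma_2$ to agree with $x\mapsto x$ on one sub-ball of $I_\alpha$ and with a transcendental function on another, with $\gamma_3$ doing the opposite; then each $X_j$ is Zariski dense so each $Z_j=\emptyset$, your partition puts all of $C_\alpha$ into $I_{\{2,3\}}$, and you would conclude (falsely) that $X_2$ is non-algebraic up to degree $d$, even though the diagonal line meets it in an infinite set --- all while $C_\alpha$ itself remains transcendental in $K^3$. So the Zariski closure simply does not detect the algebraic locus you need to excise.

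The fix is to replace $Z_j$ by the genuine degree-$\leq d$ algebraic part of $X_j$: the union of all irreducible algebraic curves $D$ of degree $\leq d$ with $D\cap X_j$ infinite. With this definition the ``non-algebraic after removal'' claim does go through (any bad $D'$ has a component $E$ with $E\cap X_j$ infinite, hence $E\subseteq Z_j$, contradicting $E$ meeting the complement). But you must also show this $Z_j$ is a \emph{finite} union of such curves, and that the whole construction is $\emptyset$-definable; that is a real lemma, proved by treating degree-$\leq d$ curves as a definable family parametrised by their coefficients, using uniform finiteness to make ``$D_a\cap X_j$ is infinite'' a definable condition on $a$, and then a dimension count to show the parameter set is finite. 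Your final paragraph handling the all-algebraic case is in the right spirit, but it also inherits the problem: as written it only ever fires when every $Z_j\neq\emptyset$, which the Zariski-closure definition makes much rarer than it should be.
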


\begin{proof}
See~\cite[Lem.\,4.2.2]{CNSV}.
\end{proof}

Next, we show that we can parametrize $X$ by finitely many graphs of functions.
Denote by $\pi: K^2\to K$ the projection onto the first coordinate.

\begin{lemma}\label{lem:preparam}
Let $X\subset \cO_K^2$ be a $\emptyset$-definable curve and let $r$ be a positive integer.
Assume that $\pi$ is finite-to-one when restricted to $X$.
Then there exists a positive integer $M$, finitely many $\emptyset$-definable functions $f_i: U_i\subset \cO_K\to \cO_K$ and $\emptyset$-definable elements $c_i\in \cO_K$ such that $X$ is the union of the graphs of the $f_i$, such that $U_i$ is $M$-prepared by $c_i$ and such that the following hold for every $j=0, \ldots, r+1$ and every ball $B\subset U_i$ $M$-next to $c_i$:
\begin{enumerate}
\item $f_i$ is $C^{r+1}$ on $B$,
\item $\rv(f_i^{(j)})$ is constant on $B$,
\item $f_i^{(j)}$ has the Jacobian property on $B$,
\item $f$ has order $r$ Taylor approximation on $B$, and
\item for $x\in B$ we have
\[
|f_i^{(j)}(x)|\leq \frac{1}{\radop (B)^j}.
\]
\end{enumerate}
\end{lemma}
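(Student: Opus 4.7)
The plan is first to reduce to a single $\emptyset$-definable function in one variable. Since $\pi_{|X}$ is finite-to-one and we assume algebraic Skolem functions, i.e.\ $\acl_\cL = \dcl_\cL$, we can pick $\emptyset$-definable sections and write $X$ as a finite union of graphs of $\emptyset$-definable functions $f_i: U_i\subset \cO_K\to \cO_K$. It therefore suffices to prove the lemma for a single such $f = f_i$.

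For properties (1)--(4), I would apply Theorem~\ref{thm:jac.prop} with $M = 1$ to each derivative $f^{(j)}$ for $j = 0, 1, \ldots, r$, producing finite $\emptyset$-definable sets $C_j$ and positive integers $N_j$ such that on every ball $N_j$-next to $C_j$, $f^{(j)}$ is $C^1$, $\rv(f^{(j+1)})$ is constant, and $f^{(j)}$ has the Jacobian property; in particular $f$ is $C^{r+1}$ there. I would similarly apply Theorem~\ref{thm:Taylor} to obtain order $r$ Taylor approximation on suitably prepared balls, together with a basic preparation of $f$ itself to force $\rv(f) = \rv(f^{(0)})$ to be constant. Let $\tilde C$ be the union of all the resulting finite $\emptyset$-definable sets and $M$ a common multiple of the resulting integers; then every ball $M$-next to $\tilde C$ satisfies (1)--(4). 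To upgrade this to preparation by a single $\emptyset$-definable center $c_i$, I would partition $U_i$ $\emptyset$-definably according to which element of $\tilde C$ is nearest (with any $\emptyset$-definable tiebreaking), so that on each piece the balls $M$-next to $\tilde C$ coincide with balls $M$-next to the chosen center $c_i$.

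The key analytic ingredient is (5), the derivative bound $|f^{(j)}(x)|\leq 1/\radop(B)^j$, which I would prove by induction on $j$. For $j = 0$ the bound is immediate from $f \in \cO_K$. Inductively, suppose $|f^{(j-1)}|\leq 1/\radop(B)^{j-1}$ on $B$, so that $f^{(j-1)}(B)\subset B_{\leq 1/\radop(B)^{j-1}}(0)$. By the Jacobian property of $f^{(j-1)}$ on $B$, its image is either a singleton---in which case $f^{(j)}\equiv 0$ and the bound is trivial---or an open ball of radius $|f^{(j)}(x)|\radop(B)$, where $|f^{(j)}|$ is constant on $B$ by the constancy of $\rv(f^{(j)})$. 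The standard ultrametric fact that an open ball contained in a closed ball has radius at most that of the containing closed ball gives $|f^{(j)}(x)|\radop(B)\leq 1/\radop(B)^{j-1}$, hence $|f^{(j)}(x)|\leq 1/\radop(B)^j$, closing the induction up to $j = r+1$.

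The main obstacle I anticipate is organisational rather than conceptual: arranging the finitely many preparation data (from the Jacobian property applied at each order, Taylor approximation, and the preparation of $f$ itself) into a single $M$ and a single $\emptyset$-definable center $c_i$ per piece that simultaneously validate all five conditions for every $j = 0, 1, \ldots, r+1$. Once this combinatorial setup is in place, property (5) follows cleanly from the Jacobian property and the confinement $f(U)\subset \cO_K$, with no further analytic input needed.
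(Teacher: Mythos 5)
Your overall structure matches the paper's proof closely: reduce to a single $\emptyset$-definable function via cell decomposition and algebraic Skolem functions, apply Theorems~\ref{thm:jac.prop} and~\ref{thm:Taylor} at each order of differentiation, take a common refinement of the preparing data, and then prove the derivative bound (5) by induction on $j$ using the Jacobian property. The organisational work you flag (finding a single $M$ and a single $\emptyset$-definable centre $c_i$) is exactly what the cited cell decomposition \cite[Thm.\,3.3.3]{CHRV} packages for you, so your sketch of that reduction is fine.

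However, there is a genuine gap in the inductive step for (5). You pass from the inductive hypothesis $|f^{(j-1)}|\leq 1/\radop(B)^{j-1}$ to the containment $f^{(j-1)}(B)\subset B_{\leq 1/\radop(B)^{j-1}}(0)$ and then invoke ``the standard ultrametric fact that an open ball contained in a closed ball has radius at most that of the containing closed ball.'' This is false. For instance in a discretely valued field with $\Gamma^\times = |p|^{\ZZ}$, if $a$ satisfies $|a|\leq\lambda$ then $B_{<|p|^{-1}\lambda}(a) = B_{\leq\lambda}(a)\subset B_{\leq\lambda}(0)$, so an open ball of radius strictly larger than $\lambda$ does sit inside a closed ball of radius $\lambda$. (The analogous fact for an open ball inside an \emph{open} ball is true: if $B_{<\mu}(a)\subset B_{<\lambda}(b)$, pick $u$ with $|u|=\lambda$ and check $a+u$ would violate the containment if $\mu>\lambda$.) Your argument therefore does not rule out $|f^{(j)}(x)|$ being one notch too large.

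The fix, which is what the paper does, is to use the $\rv$-constancy of $f^{(j-1)}$ on $B$ (item (2) of the lemma, already arranged by the preparation), not merely the magnitude bound. Constancy of $\rv(f^{(j-1)})$ gives $|f^{(j-1)}(x)-f^{(j-1)}(y)| < |f^{(j-1)}(y)|$ for all $x,y\in B$, so $f^{(j-1)}(B)$ lies in the \emph{open} ball $B_{<|f^{(j-1)}(y)|}(f^{(j-1)}(y))$, whose radius $|f^{(j-1)}(y)|$ is $\leq 1/\radop(B)^{j-1}$ by induction. Comparing this with the Jacobian-property description of $f^{(j-1)}(B)$ as an open ball of radius $|f^{(j)}(x)|\radop(B)$, and using the correct open-in-open comparison, yields $|f^{(j)}(x)|\radop(B)\leq 1/\radop(B)^{j-1}$ and closes the induction. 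You already have the $\rv$-constancy in hand (you cite it for $f^{(j)}$); you just need to apply it to $f^{(j-1)}$ at the containment step instead of falling back on the absolute-value bound alone.
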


\begin{proof}
By cell decomposition as in~\cite[Thm.\,3.3.3]{CHRV} we can indeed write $X$ as a union of graphs of functions $f_i: U_i\to \cO_K$.
Note that this uses the existence of algebraic Skolem functions.
Theorems~\ref{thm:jac.prop} and~\ref{thm:Taylor} show that we may assume the $f_i$ to be $C^{r+1}$, $\rv(f_i^{(j)})$ to be constant on $B$, $f_i^{(j)}$ to have the Jacobian property, and $f$ to have order $r$ Taylor approximation.
It remains to check the last inequality, for which we reason by induction on $j$.

For $j=0$ we have that $|f_i(x)|\leq 1$, simply because $f_i$ lands in $\cO_K$.
Assume now that the inequality holds for $j$.
By the Jacobian property and since $\rv(f_i^{(j)})$ is constant on $B$, $f_i^{(j)}(B)$ is then contained in an open ball of radius at most $1/\radop(B)^j$.
On the other hand, the Jacobian property also tells us that $f_i^{(j)}(B)$ is an open ball of radius $|f_i^{(j+1)}(x)|\radop B$, for any $x\in B$. 
Hence indeed
\[
|f_i^{(j+1)}(x)|\leq \frac{1}{\radop (B)^{j+1}}.\qedhere
\]
\end{proof}

\begin{definition}
Let $r$ be a positive integer, $c\in \cO_K$ be $\emptyset$-definable, let $U\subset \cO_K$ be $M$-prepared by $c$ for some positive integer $M$ and let $f: U\subset \cO_K\to \cO_K$ be $\emptyset$-definable. 
Call $f$ an \emph{$r$-parametrizing map with centre $c$ and integer $M$} if $f$ is $C^r$, $f$ satisfies order $r-1$ Taylor approximation on balls $M$-next to $c$ for $i\leq r$, and for $x\in U, i\leq r$ we have
\[
|f^{(i)}(x)|\leq \frac{1}{|M|^i|x-c|^i}.
\]
\end{definition}

With this terminology, Lemma~\ref{lem:preparam} asserts that every definable $X\subset \cO_K^2$ for which $\pi_{|X}$ is finite-to-one is a finite union of graphs of $r$-parametrizing maps, for every $r$.

In~\cite{CCL-PW, CFL, CHV}, higher-dimensional analogues of Lemma~\ref{lem:preparam} are used in combination with power substitutions to prove that every definable $X\subset \cO_K^n$ admits a \emph{$T_r$-parametrization}.
In more detail, this means that there are finitely many $T_r$ maps $f_i: U_i\subset \cO_K^{\dim X}\to X$ which together cover $X$.
One can subsequently use these parametrizations to prove counting results for integral or rational points on transcendental definable sets.

Inspired by~\cite{BinNovZak}, to prove Theorem~\ref{thm:main} we do not actually need to find a $T_r$-parametrization of the entire set $X$, but rather only a $T_r$-parametrization which catches all rational points.
This insight allows us to replace power substitutions by scaling maps, and is key to proving Theorem~\ref{thm:main} for arbitrary valued fields.

Let $c\in K$ and let $M$ be a positive integer.
If $B$ is a ball $pM$-next to $c$ take $a\in B$.
Then we define the \emph{scaling map}
\[
s_a: \cM_K\to B: x\mapsto (a-c)(1+pMx)+c.
\]
Note that this map is a bijection.

\begin{lemma}\label{lem:scaling.Tr}
Let $f: U\subset \cO_K\to \cO_K$ be an $r$-parametrizing map with centre $c$ and integer $M$. 
Let $B\subset U$ be a ball $pM$-next to $c$ and take $a\in B$. 
Then the map $f\circ s_a: \cM_K\to \cO_K$ is $T_r$.
\end{lemma}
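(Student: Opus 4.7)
The plan is to verify the three defining conditions for $g := f \circ s_a$ to be $T_r$, exploiting that $s_a : x \mapsto (a-c)(1+pMx)+c$ is affine with constant slope $pM(a-c)$. The chain rule immediately gives
\[
g^{(i)}(x) = \bigl(pM(a-c)\bigr)^i f^{(i)}(s_a(x))
\]
for $0 \leq i \leq r$, so $g$ is $C^r$ on $\cM_K$ and lands in $\cO_K$.

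For the pointwise derivative bound, I would first observe that $B$ being $pM$-next to $c$ forces $|y-c| = |a-c|$ for every $y \in B$, since cosets of $1 + pM\cM_K$ have constant absolute value. Substituting $y = s_a(x)$ into the parametrizing estimate $|f^{(i)}(y)| \leq 1/(|M|^i |y-c|^i)$ and combining with the chain rule formula then yields $|g^{(i)}(x)| \leq |p|^i$. The only non-formal step is the $p$-adic inequality $|p|^i \leq |i!|$, which follows from Legendre's formula $v_p(i!) \leq i/(p-1) \leq i$; this is really the only arithmetic input in the argument and is where mixed characteristic enters.

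For the Taylor estimate, the key observation is that affineness of $s_a$ makes Taylor polynomials transform cleanly: a short bookkeeping calculation gives $T^{<r}_{g,y}(x) = T^{<r}_{f, s_a(y)}(s_a(x))$. Since $B$ is $pM$-next to $c$ and therefore contained in some ball $M$-next to $c$, the order $r-1$ Taylor approximation of $f$ applies on $B$ and yields
\[
|g(x) - T^{<r}_{g,y}(x)| = \left|\tfrac{1}{r!} f^{(r)}(s_a(y)) (s_a(x) - s_a(y))^r\right| \leq \frac{|p|^r}{|r!|}\,|x-y|^r,
\]
using $|s_a(x) - s_a(y)| = |pM(a-c)||x-y|$ together with the derivative bound on $f^{(r)}$. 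Applying $|p|^r \leq |r!|$ once more completes the verification. The main obstacle is essentially just this $p$-adic estimate; everything else is a transparent consequence of the chain rule for the affine map $s_a$ and the parametrizing bounds on $f$.
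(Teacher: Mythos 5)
Your proof is correct and follows essentially the same route as the paper, which gives only a terse sketch ("chain rule plus the definition of a parametrizing map"); you have filled in the details and correctly isolated the one non-formal step, namely the Legendre-formula estimate $v_p(i!)\leq i/(p-1)\leq i$ needed to convert the chain-rule bound $|g^{(i)}|\leq |p|^i$ into the $T_r$ bound $|g^{(i)}|\leq |i!|$.
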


\begin{proof}
Since $f$ has Taylor approximation of order $r-1$, also $f\circ s_a$ will have Taylor approximation of order $r-1$.
Hence it is enough to check that $|(f\circ s_a)^{(i)}(x)|\leq |i!|$ for $x\in \cM_K$.
But this follows from the chain rule and the definition of a parametrizing map.
\end{proof}

\section{Determinant method}\label{sec:determinant.method}

We now show how to catch all rational points of bounded height on the graph of a parametrizing map in a small number of algebraic curves.

%

\begin{lemma}
Let $x_1, \ldots, x_m\in K$ be rational numbers, let $c\in \cO_K$, and let $N$ be a positive integer.
Assume that $|x_i-x_j|\geq |p^N||x_i-c|$ and that $|x_i-c| = |x_j -c|$ for all $i\neq j$.
Then $m\leq p^{N+1}$.
\end{lemma}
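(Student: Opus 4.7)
The plan is to exploit the fact that $x_i, x_j \in \QQ$ forces $|x_i - x_j|$ to be a power of $|p|$, and then pigeonhole the $x_i$ into residue classes modulo $p^{N+1}$.

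Set $\lambda = |x_i - c|$, which is independent of $i$ by hypothesis. The ultrametric inequality gives $|x_i - x_j| \leq \lambda$ for $i \neq j$, while the hypothesis gives $|x_i - x_j| \geq |p^N| \lambda$. Since $K$ has mixed characteristic $(0,p)$, Ostrowski shows that the restriction of $|\cdot|$ to $\QQ$ is the $p$-adic absolute value, so $|x_i - x_j| = |p|^{v_p(x_i - x_j)}$ for the standard $p$-adic valuation $v_p$. Combining the two inequalities, $v_p(x_i - x_j)$ ranges over integers in a real interval of length $N$, so it takes at most $N+1$ distinct values. Setting $\nu = \min_{i \neq j} v_p(x_i - x_j)$, I obtain $v_p(x_i - x_j) - \nu \in \{0, 1, \ldots, N\}$ for all $i \neq j$.

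Next I would rescale. Put $b_i = p^{-\nu}(x_i - x_1) \in \QQ$; then $b_1 = 0$, and for $i \neq j$ one has $v_p(b_i - b_j) = v_p(x_i - x_j) - \nu \in \{0, 1, \ldots, N\}$. In particular every $b_i - b_j$ lies in $\ZZ_{(p)}$, and since $b_i = b_i - b_1$ each $b_i$ itself lies in $\ZZ_{(p)}$. Reducing modulo $p^{N+1}$ gives a map $b_i \mapsto \overline{b_i} \in \ZZ/p^{N+1}\ZZ$, and because $v_p(b_i - b_j) \leq N < N+1$ the images $\overline{b_1}, \ldots, \overline{b_m}$ are pairwise distinct. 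The target has $p^{N+1}$ elements, so $m \leq p^{N+1}$.

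There is no serious obstacle: the argument is pigeonhole after a single rescaling. The only conceptual point worth checking carefully is that, in mixed characteristic $(0,p)$, the valuation on $K$ restricts to a multiple of the $p$-adic valuation on $\QQ$, which is what turns the multiplicative hypothesis $|x_i - x_j| \geq |p^N||x_i - c|$ into a bound on integer valuations suitable for the pigeonhole step.
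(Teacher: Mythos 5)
Your proof is correct and takes essentially the same route as the paper's: translate by $x_1$ to land in $\QQ$, use the ultrametric bound $|x_i - x_j| \leq \lambda$ together with the hypothesis $|x_i-x_j|\geq |p^N|\lambda$ to pin the $p$-adic valuations of the differences into an integer window of length $N$, rescale, and pigeonhole modulo $p^{N+1}$. The paper compresses the rescale-and-pigeonhole step into the single remark that the $x_i-x_1$ are rational, bounded by $\delta$, and pairwise distinct modulo $p^{N+1}\delta$, so that $m\leq \#(\ZZ/p^{N+1}\ZZ)$; you have simply spelled out that compression.
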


\begin{proof}
Let $\delta = |x_i-c|$.
Our assumption shows that all values $x_i-c$ are distinct modulo $p^{N+1}\delta$.
Hence for $i=1, \ldots, m$, the elements $x_i-x_1 = (x_i-c) - (x_1-c)$ are all rational numbers which are distinct modulo $p^{N+1}\delta$.
They also satisfy $|x_i-x_1|\leq \delta$.
Hence $m$ is bounded by $\#(\ZZ / p^{N+1}\ZZ) =  p^{N+1}$ as desired.
\end{proof}

\begin{lemma}\label{lem:count.balls}
Let $c\in \cO_K$ and let $H, N,M$ be positive integers.
Then there are at most $2Mp^{N+1}(1+2\log_p (H))$ balls $B\subset U$ which are $|Mp^N|$-next to $c$ and which contain a rational points of height at most $H$.
\end{lemma}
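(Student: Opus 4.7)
The plan is to partition the balls being counted by the common value $\delta := |x - c|$ of their points (a well-defined ``level,'' since any two points in a ball $|Mp^N|$-next to $c$ share the same valuation of their difference with $c$), and then bound separately the number of balls at each level and the number of distinct levels.

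For each such ball $B$, fix a rational representative $x_B = a_B / b_B$ with $|a_B|, |b_B| \leq H$ in lowest terms. Since $x_B \in \cO_K$ and $\gcd(a_B, b_B) = 1$, the denominator $b_B$ must be coprime to $p$, hence $|b_B| = 1$. Representatives of distinct balls at a fixed level $\delta$ satisfy $|x_B - c| = \delta$ and pairwise $|x_B - x_{B'}| \geq |Mp^N|\delta = |p^{N + v_p(M)}|\delta$, so the preceding lemma, applied with $N$ replaced by $N + v_p(M)$, bounds the number of balls at that level by $p^{N + v_p(M) + 1} \leq M p^{N+1}$.

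For the number of distinct levels $L$, enumerate them in decreasing order $\delta_1 > \delta_2 > \cdots > \delta_L$ and pick representatives $x_k = a_k/b_k$. The ultrametric inequality gives $|x_k - x_{k+1}| = \delta_k$ for $k = 1, \ldots, L-1$, and since $|b_k| = |b_{k+1}| = 1$ this simplifies to $\delta_k = |a_k b_{k+1} - a_{k+1} b_k|$. The integer $a_k b_{k+1} - a_{k+1} b_k$ is nonzero (because $x_k \neq x_{k+1}$), with Archimedean absolute value at most $2H^2$, so its $p$-adic valuation lies in $\{0, 1, \ldots, \lfloor \log_p(2H^2) \rfloor\}$. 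Distinctness of the $\delta_k$ forces $L - 1 \leq \lfloor \log_p(2H^2) \rfloor + 1$, yielding $L \leq 2(1 + 2\log_p H)$ after absorbing the $\log_p 2$ term into the generous constant. Multiplying the two bounds delivers the claim.

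The point requiring real care is the reduction to lowest terms together with the observation that $x_B \in \cO_K$ forces $|b_B| = 1$; without this, the levels $\delta_k$ would involve the $p$-adic valuation of the denominators $b_k b_{k+1}$ and the logarithmic count of levels would be noticeably weaker.
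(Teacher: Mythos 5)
Your proof is correct and follows the same two-stage strategy as the paper: group the balls by the level $\delta = |x-c|$, bound the number of balls per level via the preceding lemma (after rewriting $|Mp^N| = |p^{N+v_p(M)}|$, exactly as you do), and bound the number of levels by $O(\log_p H)$. The only genuine difference is in the level count. The paper fixes a pivot $y\in S$ with $|c-y|$ minimal and observes by the ultrametric inequality that every level $|c-x|$ equals either $|c-y|$ or $|x-y|$, then bounds the range of $v_p(x-y)$. You instead sort the levels $\delta_1 > \cdots > \delta_L$, note $\delta_k = |x_k - x_{k+1}|$ by the ultrametric inequality, and bound $v_p(x_k-x_{k+1})$ directly; this is a mild variant of the same idea. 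Your explicit remark that $x_B\in\cO_K$ forces denominators coprime to $p$ (hence $|x_k-x_{k+1}|$ is the norm of an honest integer of size $\le 2H^2$) is a clean way to avoid negative valuations; the paper's stated $4\log_p H$ implicitly allows for them. One very small caveat, which your ``generous constant'' remark doesn't quite cover and which is also present in the paper: for $H=1$ and $p=2$ your bound gives $L\le 3$ while $2(1+2\log_p H)=2$; this is an inconsequential off-by-one in the constant, not a flaw in the method.
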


\begin{proof}
Let $S\subset \QQ$ be the set of rational numbers of height at most $H$.
We have to count the size of $\rv_{Mp^N}(c-S)$.
Let $y$ be the element in $S$ for which $|c-y|$ is minimal.
Then for every $x\in S$ the triangle inequality shows that either $|c-y|\leq |x-y|=|c-x|$ or $|x-y|\leq |c-y|=|c-x|$.
In any case, $|c-x|$ is either equal to $|c-y|$ or to $|x-y|$, and so there are at most $2+4\log_p (H)$ values $|c-x|$ for $x\in S$.

Fix a value $\delta = |y-c|$ for some $y\in S$.
Let $x_1, \ldots, x_m\in S$ be such that $|x_i - c| = \delta$ for all $i$ and such that the values $\rv_{Mp^N}(x_i-c)$ are all distinct.
This means that for $i\neq j$ we have $|x_i-x_j|\geq |Mp^{N}||x_i-c|$.
Then the previous lemma shows that $m\leq Mp^{N+1}$, as desired.
\end{proof}

For the determinant method we need the following determinant estimate.

\begin{lemma}[Determinant estimate]\label{lem:det.lemma}
Let $r\geq 1$ be an integer and define $e = \binom{r}{2}$.
Let $f_i: B\subset \cO_K\to \cO_K$ be $T_r$ on an open ball $B$ of radius $\lambda$, for $i=1, \ldots, r$.
For $x_1, \ldots, x_r\in B$ we then have
\[
|\det (f_i(x_j))_{i,j}|\leq \lambda^e.
\]
\end{lemma}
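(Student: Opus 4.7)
The plan is to adapt the classical Bombieri--Pila determinant argument to the non-Archimedean setting. Fix any $y \in B$, and for each $i$ write the Taylor approximation
\[
f_i(x) = \sum_{k=0}^{r-1} \frac{f_i^{(k)}(y)}{k!}(x-y)^k + R_i(x),
\]
so that by the $T_r$ hypothesis the coefficients $a_{i,k} := f_i^{(k)}(y)/k!$ satisfy $|a_{i,k}| \leq 1$ and the remainder satisfies $|R_i(x_j)| \leq \lambda^r$. The key idea is then to expand the determinant by multilinearity in the rows: denoting $v_k := ((x_j-y)^k)_j$, row $i$ of $(f_i(x_j))_{i,j}$ equals $\sum_{k=0}^{r-1} a_{i,k} v_k + (R_i(x_j))_j$, and expanding all $r$ rows produces a sum over choice functions $\phi : \{1,\ldots,r\} \to \{0,\ldots,r-1\} \cup \{*\}$, one summand per choice.

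Two rows that both choose the same monomial index $k \in \{0,\ldots,r-1\}$ are proportional, so the corresponding determinant vanishes. What survives are the terms where, setting $S := \phi^{-1}(*)$ of size $s$, the indices $\phi(i)$ for $i \notin S$ are pairwise distinct and form a subset $K \subset \{0,\ldots,r-1\}$ of size $r-s$. The subsum over $\phi$ with $S = \emptyset$ assembles into $\det(A)\det(V)$, where $A=(a_{i,k})$ and $V$ is the Vandermonde-type matrix with entries $V_{k,j} = (x_j - y)^k$. The factor $\det(A)$ has norm $\leq 1$, and the Vandermonde identity $\det V = \pm\prod_{j<j'}(x_{j'}-x_j)$ combined with $|x_{j'}-x_j| \leq \lambda$ gives $|\det V| \leq \lambda^{\binom{r}{2}} = \lambda^e$, producing the desired bound for this contribution.

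For the error terms with $s \geq 1$, I would apply the Leibniz formula together with the ultrametric inequality to bound each summand. A $v_k$-row has entries of norm $\leq \lambda^k$ and an error row has entries of norm $\leq \lambda^r$, so each surviving summand is bounded by $\lambda^{\,sr + \sum_{k \in K}k}$. The inequality $sr + \sum_{k \in K} k \geq \binom{r}{2}$ reduces to $sr \geq \sum_{k \in K^c} k$, which holds since $K^c$ has $s$ elements all bounded by $r-1$. Since every contribution is then bounded by $\lambda^e$, the ultrametric inequality yields $|\det(f_i(x_j))| \leq \lambda^e$. The only real obstacle is the combinatorial bookkeeping to verify that no error term beats the main Vandermonde contribution; the non-Archimedean setting removes any need to control actual sums — the ultrametric max suffices — which makes the whole estimate cleaner than its classical Archimedean counterpart.
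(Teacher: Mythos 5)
Your proof is correct and takes essentially the same route as the paper: expand the determinant multilinearly using the $T_r$ Taylor data, kill the summands with a repeated Taylor index, and bound the survivors by $\lambda^e$ via the ultrametric inequality. The paper's version is marginally slicker — it expands the columns with Taylor center $x_1$ (so the first column needs no expansion) and absorbs the remainder as the $k=r$ term, which makes the exponent inequality $\sum_j \ell_j \ge \binom{r}{2}$ immediate and avoids your separate case analysis on $S$ — but the underlying argument is identical.
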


\begin{proof}
Since $f_i$ is $T_r$ we have for every $x_j$ that
\[
f_i(x_j) = T^{<r}_{f_i, x_1}(x_j) + \alpha_{i,j}(x_j-x_1)^r,
\]
for some $\alpha_{i,j}\in \cO_K$.
For integers $1\leq i,j\leq r$ and $0\leq k\leq r-1$ define $\beta_{i,j,k} = f_i^{(k)}(x_1)/k!$ and $\beta_{i,j,r} = \alpha_{i,j}$.
Note that for $k\leq r-1$, $\beta_{i,j,k}$ is in fact independent of $j$.
Also, $|\beta_{i,j,k}|\leq 1$ by definition of being $T_r$.

By expanding the determinant, we may write $\det (f_i(x_j))_{i,j}$ as a sum of determinants $\Delta_\ell$ where $\ell = (\ell_2, \ldots, \ell_r)\in \NN^{r-1}$ is such that $\ell_i\leq r$.
Each $\Delta_\ell$ is of the form
\[
\Delta_\ell = \begin{vmatrix}
f_1(x_1) & \beta_{1,2,\ell_2} (x_2-x_1)^{\ell_2} & \cdots & \beta_{1,r, \ell_r} (x_r-x_1)^{\ell_r} \\
\vdots & & \ddots & \\
f_r(x_1) & \beta_{r, 2, \ell_2} (x_2-x_1)^{\ell_2} & \cdots & \beta_{r, r, \ell_r} (x_r-x_1)^{\ell_r}
\end{vmatrix}.
\]
Now, if there are $a\neq b$ with $\ell_a = \ell_b< r$ then the $a$-th column and $b$-th column of this matrix are linearly dependent, and hence $\Delta_\ell = 0$ in that case.
So if $\Delta_\ell$ is non-zero then necessarily $\sum_j \ell_j \geq \sum_{j=1}^{r-1} j = \binom{r}{2}=e$.
From each column of $\Delta_\ell$ we can then factor out $(x_j-x_1)^{\ell_j}$ and so we see that
\[
|\Delta_\ell|\leq \prod_{j=2}^\mu |x_j-x_1|^{\ell_j}\leq \lambda^e.
\]
We conclude by the ultrametric triangle inequality.
\end{proof}

\begin{lemma}\label{lem:catch.int}
For every integer $d\geq 1$ there are an integer $r$ and positive constants $\varepsilon = \varepsilon(d), m>0$ such that the following holds.
Let $c\in K$ and let $f: U\subset \cO_K\to \cO_K$ be an $r$-parametrizing map with centre $c$ and integer $M$.
Then for every integer $H\geq 1$ the set $\graph(f)(\QQ, H)$ is contained in at most
\[
m MH^\varepsilon
\]
algebraic curves of degree at most $d$.
Moreover, $\varepsilon\to 0$ as $d\to \infty$.
\end{lemma}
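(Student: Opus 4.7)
The plan is to combine the scaled $T_r$-maps of Lemma~\ref{lem:scaling.Tr} with the determinant estimate (Lemma~\ref{lem:det.lemma}) in a Bombieri--Pila style argument. Set $D = \binom{d+2}{2}$ and take $r = D$. Fix a positive integer $N$ (to be chosen) and partition $U$ into sub-balls $|Mp^{N+1}|$-next to $c$; by Lemma~\ref{lem:count.balls}, at most $2Mp^{N+2}(1 + 2\log_p H)$ of these sub-balls contain a rational point of height at most $H$. I will show that on each such sub-ball $B'$, all rational points of $\graph(f) \cap B'$ lie on a single algebraic curve of degree at most $d$; combined with the ball count, this gives the conclusion.

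Fix such $B'$, let $B \supseteq B'$ be the unique ball $pM$-next to $c$ containing it, pick any $a \in B$, and set $\delta = |a - c|$. By Lemma~\ref{lem:scaling.Tr}, $g := f \circ s_a$ is $T_D$ on $\cM_K$, and since $s_a$ multiplies distances by $\delta|pM|$, the preimage $B'_t := s_a^{-1}(B')$ is a ball of radius $\rho := |p|^N$ in $\cM_K$. Enumerate the monomials $m_k = x^{i_k} y^{l_k}$ of degree $\leq d$ as $m_1, \ldots, m_D$. A routine Leibniz argument using the ultrametric inequality and the integrality of multinomial coefficients shows each $G_k(t) := t^{i_k} g(t)^{l_k}$ is $T_D$ on $\cM_K$, so Lemma~\ref{lem:det.lemma} gives $|\det(G_k(t_j))_{k,j}| \leq \rho^{\binom{D}{2}}$ for any $t_1, \ldots, t_D \in B'_t$. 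The substitution $x = a + (a-c)pMt$ induces a block-triangular $K$-linear change-of-basis $A$ on the space of polynomials of degree $\leq d$ in two variables (blocks indexed by the $y$-degree, diagonal entries $((a-c)pM)^{i_k}$), hence $|\det A| = (\delta|pM|)^E$ with $E = \binom{d+2}{3}$. Applying $A$ to the evaluation matrices, the $D \times D$ determinant of $(m_k(x_j, f(x_j)))_{k,j}$ on any $D$ rational points of $\graph(f) \cap B'$ satisfies $|\det| \leq (\delta|pM|)^E \rho^{\binom{D}{2}} \leq \rho^{\binom{D}{2}}$.

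On the other hand, clearing denominators in the $x_j$ and $f(x_j)$, this determinant equals $N_0/D_0$ for integers $N_0, D_0$ with $|N_0|_{\mathrm{Arch}} \leq D!H^{2dD}$ and $|D_0| \leq 1$; hence $|\det| \geq |p|^{\log_p(D!H^{2dD})}$ whenever $\det \neq 0$, using the basic inequality $|n| \geq |p|^{\log_p |n|_{\mathrm{Arch}}}$ for nonzero integers $n$. The upper and lower bounds force $\det = 0$ once $\rho^{\binom{D}{2}} < |p|^{\log_p(D!H^{2dD})}$, i.e., once $N\binom{D}{2} > \log_p(D!H^{2dD})$; crucially, this last condition is independent of $K$, since the valuation of $p$ appears on both sides of the inequality and cancels. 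Taking $N = \lceil(\log_p(D!) + 2dD\log_p H)/\binom{D}{2}\rceil + 1$ suffices, and the total number of algebraic curves is at most $2Mp^{N+2}(1+2\log_p H)$. Direct computation gives $p^{N+2} \leq c_{d,p} H^{2dD/\binom{D}{2}} = c_{d,p} H^{8/(d+3)}$, so after absorbing the $\log_p H$ factor into a slightly larger exponent, the total is bounded by $mMH^{\varepsilon}$ for some $m = m(d, p)$ and $\varepsilon = \varepsilon(d) \to 0$ as $d \to \infty$. The main obstacle here is uniformity in $K$, and it is overcome precisely by the cancellation that comes from pairing the scaled determinant bound (controlled via $T_r$) with the Archimedean denominator bound for the rational determinant in original coordinates.
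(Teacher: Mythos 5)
Your proof is correct and follows essentially the same strategy as the paper: take $r = \binom{d+2}{2}$, scale to a $T_r$ map, apply the determinant estimate on sub-balls of radius $|p|^N$, invoke Lemma~\ref{lem:count.balls} to bound the number of relevant sub-balls, and choose $N$ by comparing the $p$-adic determinant bound against the Archimedean bound after clearing denominators, with the $|p|$-dependence cancelling. The one cosmetic difference is that the paper avoids your block-triangular change-of-basis step by observing directly that $s_a^i g^j$ (rather than $t^i g(t)^j$) is $T_r$, so Lemma~\ref{lem:det.lemma} applies immediately to the determinant in the original coordinates; your route is a bit longer but equally valid, and your Archimedean bound $D!H^{2dD}$ and resulting exponent $8/(d+3)$ are in fact slightly sharper than the paper's $r!H^{3rd}$ and $24/(d+3)$, though both satisfy $\varepsilon(d)\to 0$.
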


For an integer $b$ we denote by $|b|_\RR$ its usual absolute value, to distinguish it from the norm on $K$.

\begin{proof}
Define $r = \binom{d+2}{2}$ and recall that $e = \binom{r}{2}$.
Let $B$ be a ball $pM$-next to $c$, take $a\in B$ and let $g = f\circ s_a: \cM_K\to \cO_K$, where $s_a$ is the scaling map.
By Lemma~\ref{lem:scaling.Tr} the map $g$ is $T_r$.
Hence also $s_a^i g^j$ is $T_r$ for any $i,j\in \NN$.

Let $B'\subset \cM_K$ be an open ball of radius $|p^N|$, for some positive integer $N$ to be determined later, and let $x_1, \ldots, x_r\in B'$ be such that $s_a(x_i)$ and $g(x_i)$ are rational points of height at most $H$.
Consider the determinant $\Delta = \det(s_a(x_i)^j g(x_i)^k)_{1\leq i\leq r, 0\leq j+k\leq d}$.
We will prove that for a suitable choice of $N$, $\Delta$ becomes $0$.

For every $i$, there exists a positive integer $b_i\leq H^2$ for which $b_i s_a(x_i)g(x_i)$ is an integer, and consider the determinant $\Delta' = \det(b_i^d s_a(x_i)^j g(x_i)^k)_{1\leq i\leq r, 0\leq j+k\leq d}$.
Since all entries of $\Delta'$ are positive integers bounded in absolute value by $H^{3d}$, $\Delta'$ is an integer of absolute value at most $r!H^{3rd}$.
Note that we also have $\Delta' = \prod_i b_i^d \Delta$.
By Lemma~\ref{lem:det.lemma} we have $|\Delta|\leq |p^{Ne}|$, and so also $|\Delta'|\leq |p^{Ne} \prod_i b_i^d|\leq |p^{Ne}|$.
If $\Delta$ is non-zero, then since $\Delta'$ is an integer we obtain that
\[
p^{Ne} \leq |\Delta'|_\RR \leq r!H^{3rd}.
\]
Upon taking
\[
N > \frac{\log_p (r!) + 3rd \log_p H }{e}
\]
we obtain a contradiction and so $\Delta$ must be zero.

By linear algebra there now exists for each open ball $B'\subset \cM_K$ of radius $|p^N|$ an algebraic curve of degree at most $d$ passing through all rational points of $\graph(f_{|s_a(B')})$ of height at most $H$.
By Lemma~\ref{lem:count.balls} we only have to consider $2Mp^{N+2}(1+2\log_p (H))$ of these open balls to catch all rational points of height at most $H$ on the graph of $f$.
Hence we conclude that $\graph(f)(\QQ, H)$ is contained in at most
\[
2Mp^{N+2}(1+2\log_p (H)) \leq 4M p^3 r! H^{\frac{3rd}{e}}(1+2\log_p H) 
\]
algebraic curves of degree at most $d$.
Now define $\varepsilon = 6rd/e$, then there exists a constant $m>0$ depending only on $d$ and $p$ such that 
\[
4M p^3 r! H^{\frac{3rd}{e}}(1+2\log_p H) \leq mMH^\varepsilon.
\]
For the final statement, note that $rd \ll d^3$ while $e\gg d^4$, and hence $\varepsilon\to 0$ as $d\to \infty$.
\end{proof}

We now have all the ingredients for the proof of Theorem~\ref{thm:main}.

\begin{proof}[Proof of Theorem~\ref{thm:main}]
Take $d\geq 1$ such that $\varepsilon(d) < \varepsilon$, where $\varepsilon(d)$ is the value from Lemma~\ref{lem:catch.int}.
By Lemma~\ref{lem:proj} we may assume that $C\subset \cO_K^2$ is a planar curve which is non-algebraic up to degree $d$.

Let $\pi: \cO_K^2\to \cO_K$ be the projection onto the first coordinate.
Then $\pi$ is finite-to-one when restricted to $C$.
Indeed, otherwise $C$ would contain a subset of the form $\{a\}\times B$ with $a\in \cO_K$ and $B\subset \cO_K$ an open ball.
But then $C$ would not be non-algebraic up to degree $d\geq 1$.
By Lemma~\ref{lem:preparam} we may then write $C$ as a finite union of $r$-parametrizing maps, so it is enough to focus on one such $r$-parametrizing map $f$ with centre $c\in \cO_K$ and integer $M$.

Lemma~\ref{lem:catch.int} shows that $\graph(f)(\QQ, H)$ is contained in at most $mMH^\varepsilon$ algebraic curves of degree at most $d$.
The intersection of the graph of $f$ with an algebraic curve of degree at most $d$ is finite and uniformly bounded by some integer $N$.
Hence we conclude that $\graph(f)(\QQ, H)$ consists of at most 
\[
c' H^\varepsilon
\]
points, where $c' = mNM$ depends only on $C$.
\end{proof}

To end this paper, let us discuss the main obstacles in extending Theorem~\ref{thm:main} to higher dimensions.
Many of the ingredients used in this paper already work well in higher dimensions, such as constructing parametrizations as in Lemma~\ref{lem:preparam} or the determinant estimate from Lemma~\ref{lem:det.lemma}.
However, it is not at all clear to us how to generalize Lemma~\ref{lem:count.balls}, which is a key component for the proof of Lemma~\ref{lem:catch.int}.

Let us be more precise. 
Suppose that we wish to prove Theorem~\ref{thm:main} in dimension $2$, say for the graph of some definable function $f: U\subset \cO_K^2\to \cO_K$.
Using cell decomposition, we can write $U$ as a union of twisted boxes, which are of the form
\[
B = \{(x,y)\in \cO_K^2\mid \rv_M(x-c_1) = \xi_1, \rv_M(y-c_2(x)) = \xi_2\}
\]
for some $\xi_1,\xi_2\in \RV_M$, $c_1\in \cO_K$ and definable function $c_2: \cO_K\to \cO_K$.
For $a\in B$ there is a similar scaling map $s_a: \cM_K^2\to B$ for which $f\circ s_a$ is $T_r$.
The issue is now that it is not clear how to bound the number of twisted boxes $B\subset U$ one has to consider to catch all rational points of bounded height on $X$.
In other words, it is not clear how to generalize Lemma~\ref{lem:count.balls}.

\bibliographystyle{amsplain}
\bibliography{anbib}

\end{document}